\documentclass{article}
\usepackage[utf8]{inputenc}
\usepackage{amsmath}
\usepackage[T1]{fontenc}    
\usepackage{hyperref}       
\usepackage{url}            
\usepackage{booktabs}       
\usepackage{amsfonts}       
\usepackage{nicefrac}       
\usepackage{microtype}      
\usepackage{lipsum}
\usepackage{graphicx}
\usepackage[english]{babel}
\usepackage{amsthm}
\usepackage{tabularx}
\usepackage{array}

\graphicspath{ {./images/} }

\usepackage{mathtools}

\usepackage{amssymb}

\newtheorem{theorem}{Theorem}[section]

\newtheorem{lemma}[theorem]{Lemma}
\newtheorem{definition}[theorem]{Definition}

\addtolength{\oddsidemargin}{-.875in}
	\addtolength{\evensidemargin}{-.875in}
	\addtolength{\textwidth}{1.75in}

	\addtolength{\topmargin}{-.875in}
	\addtolength{\textheight}{1.75in}

\usepackage{listings}
\usepackage{xcolor}
\lstset { %
    language=C++,
    backgroundcolor=\color{black!5}, 
    basicstyle=\footnotesize,
}
\title{ Bounds for the Number of Terms of Harmonic Sums}
\author{Keneth Adrian P. Dagal \\ 
  \texttt{kendee2012@gmail.com} \\}

\begin{document}

\maketitle

\begin{abstract}
This paper provides bounds for the number of terms, denoted by $f$, of a harmonic sum with the condition that it starts from any arbitrary unit fraction $\frac{1}{m}$, $m > 1$,  until another unit fraction $\frac{1}{m+f-1}$ such that the sum is the highest sum less than a particular positive integer $q$. Also, we consider the number of terms of  Egyptian fractions whose terms are consecutive multiples of $r$, $r \geq 1$,  under the same above condition. We end the paper with a formula for the case: $q=1$ and $r=1$.
\end{abstract}

\section{Introduction}

We start with the function defined by Dagal \cite{DagalOp}.

\begin{definition}
The function $S$ is said to be an Egyptian fraction if $$S(X) = \sum_{x \in X }{x}.$$ for all $X \in \mathcal{P}(U_f)_{\mid X\mid \geq 2}$.
\end{definition}

To illustrate the definition, suppose we have $N= \{2,3,6\}$, then $X = \{\frac{1}{2}, \frac{1}{3}, \frac{1}{6}\}$. And thus, $$S(X) = \frac{1}{2}+ \frac{1}{3}+ \frac{1}{6}=1.$$

We notice that $N$ can be of any finite collection of positive integers greater than 1 and with cardinality greater than 1. We \textit{restrict} this collection with the property that each element of $N$ are consecutive multiples of $r$, $r\geq 1$. If $r=1$, then the elements of $N$ are said to be consecutive numbers, and  the function $S$ now becomes a \textit {Harmonic Sum}, denoted by $Q_m^n$.

We know that the $n$th Harmonic number is defined as:

$$H_n = 1+\frac{1}{2}+ \frac{1}{3}+ \cdots + \frac{1}{n} = \sum_{i=1}^{n} \frac{1}{i}.$$

We focus on the expression $H_n-1$ since the terms of the said harmonic sum are unit fractions. We define formally the function $Q_m^n$.

\begin{definition}
The function $Q_m^n$ is said to be a Harmonic Sum if $$Q_m^n = \sum_{i=m}^{m+f-1}{\frac{1}{i}}.$$ where $m \geq 2$, and $n = m+f-1$, $f \geq 2$. $f$ is the number of terms of the sum.
\end{definition}

The image of this function is clearly in $\mathbb{Q^+}$ and we know that $\mathbb{Z^+} \subset \mathbb{Q^+}$. Evidently, any $H_n$ or any difference of harmonic sums, denoted by $Q_m^n$, are in $\mathbb{Q^+}$. Clearly, $$Q_m^n = H_n- H_{m-1}.$$

A question of Tavares \cite{Tavares} about harmonic sums is answered by Bill Duduque and an anonymous person. The source of the question mentioned in the page is  Cf. Exercise 6.21 (page 311) of Graham, Knuth, and Patashnik and the answered question is given below:

\begin{theorem}
Let $H_{m-1}$ and $H_{n}$ be harmonic sums for positive integers $m$ and $n$, $1 < m < n$. Then $Q_{m}^{n}$, $H_{m-1}$, and $ H_n$ are not integers.
\end{theorem}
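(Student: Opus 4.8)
The plan is to prove all three non-integrality claims at once by a single $2$-adic argument: after clearing denominators with the least common multiple of the denominators that actually occur, the cleared sum turns out to be odd, whereas an integer multiple of an even number is even. The structural fact that makes this work is that among any block of consecutive positive integers there is exactly one that is divisible by the largest power of $2$ occurring in the block.

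First I would treat $Q_m^n=\sum_{i=m}^{n}\frac1i$, where $n\ge m+1$ so the index range contains at least two consecutive integers. Let $2^t$ be the largest power of $2$ that divides some element of $\{m,m+1,\dots,n\}$, and let $j$ be such an element. The key claim is that $j$ is unique. Indeed, if $j_1<j_2$ both satisfied $2^t\divides j_i$, write $j_1=2^t a$ with $a$ odd; since $j_2/2^t$ is also odd and larger, $j_2/2^t\ge a+2$, so $2^t(a+1)$ lies strictly between $j_1$ and $j_2$, hence in the range, yet $a+1$ is even, giving an element divisible by $2^{t+1}$ — contradicting the maximality of $t$. Also $t\ge 1$, because two consecutive integers in the range include an even one.

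Next I would set $L=\operatorname{lcm}(m,m+1,\dots,n)$ and look at $L\,Q_m^n=\sum_{i=m}^{n}\frac{L}{i}$, a sum of positive integers. Since $t$ is the maximal $2$-adic valuation among $m,\dots,n$, we have $v_2(L)=t$, so $\frac{L}{j}$ is odd, while for every other $i$ in the range $v_2(i)\le t-1<v_2(L)$, making $\frac{L}{i}$ even. Hence $L\,Q_m^n$ is odd. If $Q_m^n$ were an integer $N$, then $L\,Q_m^n=LN$ would be even (as $L$ is even, $t\ge 1$), a contradiction; so $Q_m^n\notin\mathbb{Z}$. The statements for $H_n$ and $H_{m-1}$ follow from the identical argument applied to the ranges $\{1,\dots,n\}$ and $\{1,\dots,m-1\}$: here the "most $2$-divisible" index is just the largest power of $2$ not exceeding the top of the range, and its uniqueness is immediate since twice that power already exceeds the range. (One should read the hypothesis as guaranteeing each harmonic sum has at least two terms, i.e. the relevant range contains an even integer; note $H_1=1$ is genuinely an integer.)

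The only real obstacle is the uniqueness of the maximally $2$-divisible index in a general interval $[m,n]$ — this is precisely where the consecutiveness of the denominators is used, and it is exactly what fails for an arbitrary finite set of integers. Everything after that is routine bookkeeping with the exponent of $2$ in the numerator and denominator.
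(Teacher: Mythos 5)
Your proof is correct and complete. Note that the paper itself does not prove this theorem at all: it simply quotes the statement from the cited StackExchange discussion (and the Graham--Knuth--Patashnik exercise), so there is no in-paper argument to compare against; your $2$-adic argument --- the unique index of maximal $2$-adic valuation in a block of consecutive integers, followed by clearing denominators with the lcm and comparing parities --- is precisely the standard proof that the cited source relies on, and every step checks out (uniqueness via the element $2^t(a+1)$ wedged between two hypothetical maximal indices, $v_2(L)=t$, odd-plus-even versus even). You also correctly flag the one genuine defect in the statement as printed: for $m=2$ the claim about $H_{m-1}=H_1=1$ is false, so the hypothesis must be read as guaranteeing that each of the three sums has at least two terms (equivalently $m\geq 3$ for the $H_{m-1}$ clause), an issue the paper passes over silently.
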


The theorem above guarantees that $Q_{m}^{n}\notin \mathbb{Z^+}$ but $ \in \mathbb{Q^+}$. Also, in the same page, it was stated that $H_n$ cannot be an integer for $n > 1$. Thus, we can find an integer $q$ with $$Q_{m}^{n} < q < Q_{m}^{n+1}.$$ Instead of finding $q$, we fix $q$ and find the number of terms starting from a unit fraction, denoted by $(m)$, where $m$ is the denominator of the unit fraction,  up to, but not equal to, a desired integer $q$. For the remainder of the paper, we set $m > 1$.

We now set $q= 1$, and have the condition: $Q_{m}^{n} < 1 < Q_{m}^{n+1}.$ Our starting unit fraction is $(m)$ until $(m+f-1)$ such that the condition holds. With this, we have the inequality below
$$Q_{m}^{m+f-1} < 1 < Q_{m}^{m+f}.$$

The question now is: what is the value of $f$ such that the said condition holds? In other words, how many terms are there in the harmonic sum $Q_{m}^{m+f-1}$ before it exceeds 1? 

\section{The Bounds for Number of Terms of the Harmonic Sum}

To answer the question in the previous section, we start with using the definite integral  $$\int_m^{m+1}\frac{1}{x} dx = \ln\left(\frac{m+1}{m}\right),$$
 the inequality $$\frac{1}{m+1} < \int_m^{m+1}\frac{1}{x} dx \,< \frac{1}{m},$$ and by revisiting Lemma 2.3 of Dagal \cite{DagalAlo} and then revise it here for our use. So, we state the lemma below:
 
\begin{lemma}
Let $Q_m^n$ and $Q_{m+1}^{n+1}$ be the harmonic sums. Then the inequality $$ Q_{m+1}^{n+1} <  \ln \left(\frac{n+1}{m}\right) < Q_{m}^{n}$$ holds.
\end{lemma}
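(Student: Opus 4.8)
\section*{Proof proposal}

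The plan is to write the logarithm as a telescoping sum of integrals over unit intervals and then bound each integral by the two given estimates. First I would observe that
$$\ln\!\left(\frac{n+1}{m}\right) = \int_m^{n+1}\frac{1}{x}\,dx = \sum_{i=m}^{n}\int_i^{i+1}\frac{1}{x}\,dx,$$
since the intervals $[i,i+1]$ for $i=m,\dots,n$ partition $[m,n+1]$ and the integrand is continuous there. This rewriting is the conceptual heart of the argument; everything after it is bookkeeping.

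Next I would invoke the stated inequality $\frac{1}{i+1} < \int_i^{i+1}\frac{1}{x}\,dx < \frac{1}{i}$, valid for each integer $i \geq m \geq 2$ because $\frac{1}{x}$ is strictly decreasing on $[i,i+1]$. Summing the right-hand estimate over $i=m,\dots,n$ gives $\ln\!\left(\frac{n+1}{m}\right) < \sum_{i=m}^{n}\frac{1}{i} = Q_m^n$. Summing the left-hand estimate over the same range gives $\ln\!\left(\frac{n+1}{m}\right) > \sum_{i=m}^{n}\frac{1}{i+1} = \sum_{j=m+1}^{n+1}\frac{1}{j} = Q_{m+1}^{n+1}$, after the index shift $j=i+1$. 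Combining the two displays yields the claimed chain $Q_{m+1}^{n+1} < \ln\!\left(\frac{n+1}{m}\right) < Q_m^n$.

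The only point requiring a word of care is the strictness of the inequalities after summation: since the sum ranges over a nonempty set of indices (indeed $n \geq m$, and in the intended application $f = n-m+1 \geq 2$), and each summand inequality is strict, the summed inequalities remain strict. I do not anticipate a genuine obstacle here; if anything, the subtlety is purely notational—making sure the reindexing $\sum_{i=m}^{n}\frac{1}{i+1} = Q_{m+1}^{n+1}$ matches Definition of $Q_m^n$ exactly, which it does once one writes out the endpoints. This also recovers, as the degenerate one-term case, the very inequality $\frac{1}{m+1} < \ln\!\left(\frac{m+1}{m}\right) < \frac{1}{m}$ that was taken as input, which is a useful consistency check.
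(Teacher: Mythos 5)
Your proposal is correct and follows essentially the same route as the paper: the paper bounds each unit-interval contribution by $\frac{1}{i+1} < \ln\left(\frac{i+1}{i}\right) < \frac{1}{i}$ and sums, letting the product inside the logarithm telescope to $\frac{n+1}{m}$, which is just the logarithmic form of your telescoping sum of integrals. Your remark on strictness and the index shift is a welcome bit of extra care, but there is no substantive difference.
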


\begin{proof}
The inequality $$\frac{1}{m+1} < \int_m^{m+1}\frac{1}{x} dx \,< \frac{1}{m},$$ holds for any positive integer $m$.
If $m=2$, then $$\frac{1}{3} < \int_2^{3}\frac{1}{x} dx \, = \ln\left(\frac{3}{2}\right)< \frac{1}{2}.$$ By the definite integral, we can rewrite the inequality as

$$\frac{1}{m+1} < \ln\left(\frac{m+1}{m}\right) < \frac{1}{m},$$
We increment from $m$ until $n$ and add them, then we have

$$\sum_{i=m+1}^{n+1}\frac{1}{i} < \ln\left(\frac{m+1}{m}\cdot \frac{m+2}{m+1} \cdots \frac{m+f-1}{m+f-2} \cdot \frac{m+f}{m+f-1}\right) \, < \sum_{i=m}^{n}\frac{1}{i}, $$
where $n = m+f-1.$ Thus, simplifying and by definition of harmonic sum, we get
$$ Q_{m+1}^{n+1} <  \ln \left(\frac{n+1}{m}\right) < Q_{m}^{n}.$$
\end{proof}

In the next theorem, we provide the bounds for number of terms, denoted by $f$.

\begin{theorem}
If $Q_{m}^{n} < 1 < Q_{m}^{n+1}$ where $n= m+f-1$ , then $$ \lceil m (e-1)-e\rceil \leq f(m) \leq \lfloor m(e-1) \rfloor.$$
\end{theorem}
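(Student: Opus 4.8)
The plan is to feed the hypothesis $Q_m^n < 1 < Q_m^{n+1}$ into the preceding lemma, once through each of its two inequalities, and then to upgrade the resulting strict inequalities between reals to the stated integer bounds by invoking the irrationality of $e$. Throughout one uses $n+1 = m+f$.

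\textbf{Upper bound.} I would take the right half of the lemma, $\ln\big(\tfrac{n+1}{m}\big) < Q_m^n$, and combine it with $Q_m^n < 1$ to get $\ln\big(\tfrac{n+1}{m}\big) < 1$, hence $n+1 < me$, i.e. $f < m(e-1)$. Since $m$ is a positive integer and $e$ is irrational, $m(e-1)=me-m$ is irrational and in particular not an integer, so the strict inequality $f < m(e-1)$ forces $f \le \lfloor m(e-1)\rfloor$.

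\textbf{Lower bound.} Here I would first reindex the left half of the lemma: substituting $m-1$ for $m$ (legitimate because $m>1$, so $m-1$ is still a positive integer) turns $Q_{m+1}^{n+1} < \ln\big(\tfrac{n+1}{m}\big)$ into $Q_m^{n+1} < \ln\big(\tfrac{n+1}{m-1}\big)$. Combining with $1 < Q_m^{n+1}$ gives $1 < \ln\big(\tfrac{n+1}{m-1}\big)$, hence $e(m-1) < n+1 = m+f$, i.e. $f > m(e-1)-e$. As before $m(e-1)-e = me-m-e$ is irrational, hence not an integer, so $f \ge \lceil m(e-1)-e\rceil$. Putting the two bounds together yields $\lceil m(e-1)-e\rceil \le f(m) \le \lfloor m(e-1)\rfloor$.

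The algebra is routine; the step I would watch most carefully is the reindexing used for the lower bound, since the lemma as stated controls $Q_{m+1}^{n+1}$ rather than $Q_m^{n+1}$ — one must shift the lower index down by one (which is exactly why the lower bound involves $m-1$ and picks up the extra $-e$), and one should check this shift stays within the hypotheses of the lemma. The second point worth stating explicitly is the passage from strict inequalities to the floor and ceiling: it relies on $m(e-1)$ and $m(e-1)-e$ never being integers, which is immediate from the irrationality of $e$. As a sanity check I would verify a small case such as $m=2$, where $Q_2^3=\tfrac56<1<\tfrac{13}{12}=Q_2^4$ gives $f=2$, consistent with the bounds $1 \le 2 \le 3$.
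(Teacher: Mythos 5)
Your proof is correct and follows essentially the same route as the paper: the right half of Lemma 2.1 with $Q_m^n<1$ gives $f<m(e-1)$, and the left half reindexed with $m-1$ in place of $m$ combined with $1<Q_m^{n+1}$ gives $f>m(e-1)-e$, after which the integer bounds follow. The reindexing you flag is exactly the (implicit) step in the paper's lower-bound argument, and your appeal to the irrationality of $e$ is a harmless strengthening of the paper's ``since $f$ is an integer'' step.
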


\begin{proof}
By Lemma 2.1 and the given, we have 
$$ \ln \left(\frac{n+1}{m}\right) < Q_{m}^{n} < 1$$

Thus, by substitution and simplification, we have

$$ \ln \left(\frac{m+f}{m}\right) < 1$$
$$ \frac{m+f}{m} < e$$
$$ f< m(e-1)$$
Since $f$ is an integer, the closest integer above $f$ is the floor of the expression $m(e-1)$. Thus, the upper bound of $f$ is given below:
$$ f(m) \leq \lfloor m(e-1) \rfloor.$$

For the lower bound, we use the again lemma 2.1 and the given, which we have,
 $$1 < Q_{m}^{n+1} <  \ln \left(\frac{n+1}{m-1}\right)$$
 
 In similar fashion, we have
 
 $$1 <  \ln \left(\frac{m+f}{m-1}\right)$$
 $$e(m-1) <  m+f$$
 $$m(e-1)-e < f$$

Since $f$ is an integer, the closest integer below $f$ is the ceiling of the expression $m(e-1)-e$. Thus, the lower bound of $f$ is given below

$$ \lceil m (e-1)-e\rceil \leq f(m).$$

Thus the bounds for $f(m)$ is 

$$ \lceil m (e-1)-e\rceil \leq f(m) \leq \lfloor m(e-1) \rfloor.$$

\end{proof}

We now proceed to $q$ as a positive integer, in general, instead only of $q=1$ (which was given in the previous theorem). We have the bounds below.

\begin{theorem}
If $Q_{m}^{n} < q < Q_{m}^{n+1}$ , where $n= m+f-1$, then 

$$ \lceil m (e^q-1)-e^q\rceil \leq f(m, q) \leq \lfloor m(e^q-1) \rfloor.$$
\end{theorem}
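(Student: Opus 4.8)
The plan is to follow the proof of Theorem 2.2 almost verbatim, with the constant $e$ replaced throughout by $e^q$; the only tool required is Lemma 2.1, invoked twice with appropriately shifted indices. Throughout I would write $n = m+f-1$, so that $n+1 = m+f$, and split the argument into the upper bound and the lower bound.

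For the upper bound, I would chain the right-hand inequality of Lemma 2.1, namely $\ln\!\left(\frac{n+1}{m}\right) < Q_m^n$, with the hypothesis $Q_m^n < q$ to obtain $\ln\!\left(\frac{m+f}{m}\right) < q$. Exponentiating gives $\frac{m+f}{m} < e^q$, hence $f < m(e^q - 1)$. Since $f$ is an integer, this forces $f \le \lfloor m(e^q-1)\rfloor$, which is the claimed upper bound.

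For the lower bound, I would apply Lemma 2.1 after the substitution $m \mapsto m-1$, $n \mapsto n$ (legitimate because $m > 1$), which yields $Q_m^{n+1} < \ln\!\left(\frac{n+1}{m-1}\right)$. Combining this with the hypothesis $q < Q_m^{n+1}$ gives $q < \ln\!\left(\frac{m+f}{m-1}\right)$, i.e. $e^q(m-1) < m+f$, i.e. $f > m(e^q-1) - e^q$. As $f$ is an integer, $f \ge \lceil m(e^q-1) - e^q\rceil$, and combining the two inequalities yields the theorem.

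There is no genuine obstacle here: the argument is a transparent rescaling of Theorem 2.2, with $q$ entering only through the exponentiation step $\ln(\cdot) < q \Rightarrow (\cdot) < e^q$. The only points deserving attention are the index shift $m \mapsto m-1$ in the lower-bound application — which is precisely what produces the denominator $m-1$ rather than $m$ — and a brief remark that the resulting window $\bigl[\lceil m(e^q-1)-e^q\rceil,\ \lfloor m(e^q-1)\rfloor\bigr]$ is consistent: its width is about $e^q - 1$, which grows with $q$, so one should not expect the bounds to pin down $f$ uniquely for large $q$.
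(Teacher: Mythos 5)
Your proposal is correct and is exactly the argument the paper intends: the paper's own proof of this theorem just says to mimic Theorem 2.2 with $1$ replaced by $q$, and your write-up carries that out faithfully, including the same two applications of Lemma 2.1 (the lower-bound one with the shift $m \mapsto m-1$, valid since $m>1$). Your explicit statement of that index shift and the remark on the width $\approx e^q-1$ of the resulting window are welcome clarifications but do not change the route.
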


\begin{proof}
The proof is straightforward by simply mimicking the proof of Theorem 2.2 and replacing $1$ by $q$.
\end{proof}

We add another parameter $r$ where $r$ denotes the $r$th  consecutive multiples of the unit fraction of the harmonic sum $Q_m^n$. Take note that we now have a new harmonic sum with the starting unit fraction $ (m\cdot r)$ instead of $m$.

With this, we state the theorem below:

\begin{theorem}
If $\frac{1}{r}Q_{m}^{n} < q < \frac{1}{r}Q_{m}^{n+1}$ , where $n= m+f-1$, then 
$$ \lceil m (e^{qr}-1)-e^{qr}\rceil \leq f(m, q, r) \leq \lfloor m(e^{qr}-1) \rfloor.$$

\end{theorem}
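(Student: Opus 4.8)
The plan is to reduce the statement to Theorem 2.4 by clearing the factor $\tfrac{1}{r}$. Since $r \geq 1$ is a positive integer, multiplying the hypothesis $\frac{1}{r}Q_{m}^{n} < q < \frac{1}{r}Q_{m}^{n+1}$ through by $r$ gives the equivalent chain
$$Q_{m}^{n} < qr < Q_{m}^{n+1},$$
and since $q$ and $r$ are positive integers, so is $qr$. Thus $f(m,q,r)$ is precisely the quantity $f(m,qr)$ of Theorem 2.4: the number of terms $f$, with $n = m+f-1$, for which the plain harmonic sum first exceeds the integer $qr$. In short, the problem depends on $q$ and $r$ only through the product $qr$, so Theorem 2.4 applies with $q$ replaced by $qr$ and delivers exactly the claimed bounds.

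For completeness I would re-run the short argument. For the upper bound, combine $Q_{m}^{n} < qr$ with the right-hand inequality of Lemma 2.1, $\ln\!\left(\frac{n+1}{m}\right) < Q_{m}^{n}$; substituting $n+1 = m+f$ gives $\ln\!\left(\frac{m+f}{m}\right) < qr$, hence $\frac{m+f}{m} < e^{qr}$ and $f < m(e^{qr}-1)$. Since $f \in \mathbb{Z}$, this forces $f(m,q,r) \leq \lfloor m(e^{qr}-1)\rfloor$.

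For the lower bound, combine $qr < Q_{m}^{n+1}$ with the shifted form of Lemma 2.1 obtained by replacing $m$ with $m-1$ (legitimate because $m > 1$), namely $Q_{m}^{n+1} < \ln\!\left(\frac{n+1}{m-1}\right)$. Using $n+1 = m+f$ again, this yields $qr < \ln\!\left(\frac{m+f}{m-1}\right)$, hence $e^{qr}(m-1) < m+f$ and $m(e^{qr}-1) - e^{qr} < f$. Integrality of $f$ then gives $\lceil m(e^{qr}-1) - e^{qr}\rceil \leq f(m,q,r)$, and the two bounds together complete the proof.

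I do not expect a genuine obstacle: the whole content is the observation that scaling by $r$ turns the target barrier $q$ into $qr$ while leaving the combinatorial structure untouched. The only points deserving a line of care are that $qr$ remains a positive integer (so that ``the first time the sum exceeds $qr$'' is well posed exactly as in Theorem 2.4), that $m-1 \geq 1$ so that $\ln\!\left(\frac{m+f}{m-1}\right)$ has a positive argument, and that the index bookkeeping $n = m+f-1$, $n+1 = m+f$ is carried through consistently when Lemma 2.1 is invoked in its original and in its shifted form.
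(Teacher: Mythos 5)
Your proposal is correct and follows essentially the same route as the paper: multiply the hypothesis by $r$ to obtain $Q_m^n < qr < Q_m^{n+1}$, then rerun the Lemma~2.1 argument (as in Theorems~2.2 and~2.3) with the barrier $qr$ in place of $q$, which is exactly what the paper's proof does, only more tersely. Your written-out version, including the shifted application of Lemma~2.1 with $m-1$ and the check that $m-1\geq 1$, is if anything more complete than the paper's one-line reduction; the only nitpick is that the prior general-$q$ result you invoke is the paper's Theorem~2.3, not 2.4.
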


\begin{proof}
In line with the previous proofs along with introducing $r$, we have $$ q < \frac{1}{r}Q_{m}^{n+1}\Rightarrow qr < Q_{m}^{n+1} \text{ and } \frac{1}{r}Q_{m}^{n} < q  \Rightarrow Q_{m}^{n} < qr $$
From here, we can proceed to show the bounds

$$ \lceil m (e^{qr}-1)-e^{qr}\rceil \leq f(m, q, r) \leq \lfloor m(e^{qr}-1) \rfloor$$

same as the proofs above.
\end{proof}

\section{Illustration of the Theorems}

In theorem 2.4, we have the bounds:
$$ \lceil m (e^{qr}-1)-e^{qr}\rceil \leq f(m, q, r) \leq \lfloor m(e^{qr}-1) \rfloor$$

for the number of terms $f$, with independent parameters $m$, $q$, and $r$.

By letting $A = m (e^{qr}-1)-e^{qr}$, we can rewrite the inequality
$$ \lceil A\rceil \leq f(m, q, r) \leq \lfloor A+ e^{qr}\rfloor$$

Notice that the range of the candidates for $f$ is within $e^{qr}$. If we have $q=1$ and $r=1$, then the range becomes $e$. With this information, we can have a theorem below:

\begin{theorem}
Let $E = m (e-1)$. Then $$f(m) = \left\lfloor E-\frac{e}{2}\right\rfloor \text{ or } \left\lfloor E-\frac{e}{2}\right\rfloor+1 $$

\end{theorem}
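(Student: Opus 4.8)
The plan is to tighten both halves of the argument behind Theorem~2.2. That argument used only the crude bound $\tfrac{1}{m+1}<\ln\tfrac{m+1}{m}<\tfrac1m$, which confines $f(m)$ to an interval of length $e\approx 2.72$, so that $\lceil E-e\rceil\le f(m)\le\lfloor E\rfloor$ can still leave three integer candidates. I would instead use the \emph{midpoint} comparison coming from convexity of $1/x$, namely $\tfrac1i<\ln\tfrac{2i+1}{2i-1}$ for all $i\ge1$, together with a one-parameter family of one-sided refinements of the same telescoping shape. This shrinks the admissible interval for $f(m)$ to length just over $1$, placed so that the only survivors are $\lfloor E-\tfrac e2\rfloor$ and $\lfloor E-\tfrac e2\rfloor+1$.

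\emph{Lower bound for $f$.} Summing $\tfrac1i<\ln\tfrac{2i+1}{2i-1}=\ln\tfrac{i+1/2}{i-1/2}$ over $i=m,\dots,n+1$, the right-hand side telescopes to $\ln\tfrac{2n+3}{2m-1}$, so $Q_m^{n+1}<\ln\tfrac{2n+3}{2m-1}$. With $Q_m^{n+1}>1$ this forces $\tfrac{2n+3}{2m-1}>e$, and substituting $n=m+f-1$ and simplifying gives $f> m(e-1)-\tfrac e2-\tfrac12=E-\tfrac e2-\tfrac12$.

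\emph{Upper bound for $f$.} For a parameter $c\in(0,1)$ the inequality $\tfrac1i\ge\ln\tfrac{i+1-c}{i-c}$ is equivalent (clearing denominators, using $i-c>0$) to $c\le\psi(i):=i-\tfrac{1}{e^{1/i}-1}$. The function $\psi(x)=x-\tfrac{1}{e^{1/x}-1}$ is increasing on $(0,\infty)$: writing $u=1/x$, the condition $\psi'(x)>0$ simplifies to $e^{u/2}-e^{-u/2}>u$, that is, $\sinh(u/2)>u/2$, which holds for all $u>0$. Hence, choosing $c=\psi(m)$, the inequality $\tfrac1i\ge\ln\tfrac{i+1-c}{i-c}$ holds for every $i\ge m$; summing over $i=m,\dots,n$, the right side telescopes to $\ln\tfrac{n+1-c}{m-c}$, giving $Q_m^n>\ln\tfrac{n+1-c}{m-c}$. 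Now $Q_m^n<1$ gives $\tfrac{n+1-c}{m-c}<e$, hence $f<(m-c)(e-1)=E-c(e-1)$.

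\emph{Assembly and read-off.} Since $\psi$ is increasing, $c=\psi(m)\ge\psi(1)=\tfrac{e-2}{e-1}$, so $c(e-1)\ge e-2>\tfrac e2-1$, and the upper bound becomes $f<E-\tfrac e2+1$. Combined with the lower bound, $f(m)$ lies in the open interval $\bigl(E-\tfrac e2-\tfrac12,\;E-\tfrac e2+1\bigr)$. Put $g=E-\tfrac e2=(m-\tfrac12)e-m$, which is irrational, so $0<\{g\}<1$. A short check shows that an open interval of the shape $(g-\tfrac12,\,g+1)$ always contains $\lfloor g\rfloor+1$, contains $\lfloor g\rfloor$ exactly when $\{g\}<\tfrac12$, and never contains $\lfloor g\rfloor-1$ or $\lfloor g\rfloor+2$. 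Therefore $f(m)\in\{\lfloor g\rfloor,\lfloor g\rfloor+1\}=\bigl\{\lfloor E-e/2\rfloor,\ \lfloor E-e/2\rfloor+1\bigr\}$, as claimed.

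\emph{Expected main obstacle.} The delicate step is the upper bound: producing a telescoping one-sided comparison $\tfrac1i\ge\ln\tfrac{i+1-c}{i-c}$ valid \emph{uniformly} for $i\ge m$, with $c$ close enough to $\tfrac12$ that the resulting bound clears $E-\tfrac e2+1$. Identifying the sharp threshold as $c=\psi(m)$ and proving that $\psi$ is increasing (so that $\psi(m)$ is the binding constraint over all $i\ge m$, and $\psi(m)\ge\psi(1)>\tfrac{e-2}{2(e-1)}$) is the technical heart; once that is in place, the lower bound, the assembly, and the floor/fractional-part bookkeeping are all routine, and essentially everything else is Theorem~2.2 with the sharper integral comparison substituted for the crude one.
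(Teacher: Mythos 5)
Your proposal is correct, and it reaches the theorem by a genuinely different route than the paper. The paper keeps the Theorem 2.2 window $\lceil E-e\rceil\le f(m)\le\lfloor E\rfloor$ (length $e$, hence up to three integer candidates) and argues by comparison with the midpoint $E-\frac{e}{2}$, relying on displayed inequalities such as $0\le\lfloor E\rfloor-\left\lceil E-\frac{e}{2}\right\rceil<\frac{e-1}{2}$; that step is only heuristic (e.g.\ at $m=1000$ one has $\lfloor E\rfloor-\left\lceil E-\frac{e}{2}\right\rceil=1>\frac{e-1}{2}$), so the paper does not rigorously exclude the extreme candidate. You instead sharpen the integral comparison itself: the midpoint/convexity bound $\frac{1}{i}<\ln\frac{2i+1}{2i-1}$ telescopes over $i=m,\dots,n+1$ to give $f>E-\frac{e}{2}-\frac{1}{2}$, and the calibrated bound $\frac{1}{i}\ge\ln\frac{i+1-c}{i-c}$ with $c=\psi(m)$, where $\psi(x)=x-\frac{1}{e^{1/x}-1}$ is increasing (your reduction of $\psi'>0$ to $\sinh(u/2)>u/2$ is correct, as are the equivalence $c\le\psi(i)$ and the telescoping), gives $f<E-c(e-1)\le E-(e-2)<E-\frac{e}{2}+1$. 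The resulting open window $\left(E-\frac{e}{2}-\frac{1}{2},\,E-\frac{e}{2}+1\right)$ has length $\frac{3}{2}$, and since $E-\frac{e}{2}=(m-\frac12)e-m$ is irrational, your read-off that its only integer points are $\left\lfloor E-\frac{e}{2}\right\rfloor$ (when the fractional part is below $\frac12$) and $\left\lfloor E-\frac{e}{2}\right\rfloor+1$ (always) is right. What your approach buys is a complete, rigorous proof and in fact a tighter localization of $f$; what the paper's approach buys is brevity, reusing Theorem 2.2 unchanged, but at the cost of a genuine gap in eliminating the third candidate.
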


\begin{proof}
In theorem 2.2, we have the inequality 

$$ \lceil m (e-1)-e\rceil \leq f(m) \leq \lfloor m(e-1) \rfloor.$$

and rewrite the previous inequality as:

$$ \lceil E-e\rceil \leq f(m) \leq \lfloor E \rfloor.$$

Finding the middle value of the range we have $E- \frac{e}{2}$. Thus,

$$ \lceil E-e\rceil \leq f(m) < E - \frac{e}{2} < f(m) \leq \lfloor E \rfloor.$$

Since the expression $E - \frac{e}{2}$ can not be an integer due to the irrationality of $e$, then

$$\left\lceil E - \frac{e}{2}\right\rceil- \left\lfloor E- \frac{e}{2}\right\rfloor  = 1$$

With this, we can deduce the following:

$$ 0 \leq \lfloor E \rfloor-\left\lceil E - \frac{e}{2}\right\rceil < \frac{e-1}{2} < 1$$
$$0 \leq \left\lfloor E- \frac{e}{2}\right\rfloor -\lceil E- e\rceil < \frac{e-1}{2} < 1$$

Clearly, if the inequality below is true, then we are done.

$$ \lceil E-e\rceil = f(m)=\left\lfloor E- \frac{e}{2}\right\rfloor  \text{ or }\left\lceil E - \frac{e}{2}\right\rceil = f(m) = \lfloor E \rfloor.$$

Or else, we have $$f(m)=\left\lfloor E- \frac{e}{2}\right\rfloor  \text{ or }\left\lceil E -\frac{e}{2}\right\rceil = f(m)$$ is true in either of the following case: 

$$ \lceil E-e\rceil  < \left\lfloor E- \frac{e}{2}\right\rfloor  \text{ or }\left\lceil E - \frac{e}{2}\right\rceil < \lfloor E \rfloor.$$

In summary, 

$$f(m) = \left\lfloor E - \frac{e}{2}\right\rfloor \text{ or } \left\lfloor E - \frac{e}{2}\right\rfloor+1 $$

\end{proof}

We can now generalize the previous theorem for positive integers $q$ and $r$, not necessarily 1. As these two number increases,  the range of the bounds also increases but the real value might be closer to the midpoint. We now state the theorem. 

\begin{theorem}
Let $F = m (e^{qr}-1)$. Then $$  \lceil F-e^{qr}\rceil \leq f(m,q,r) \leq \left\lfloor F- \frac{e^{qr}}{2}\right\rfloor \text{ or } 
\left\lfloor F- \frac{e^{qr}}{2}\right\rfloor+1 \leq  f(m,q,r) \leq  \lfloor F \rfloor.$$

\end{theorem}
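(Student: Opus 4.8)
The plan is to follow the template of the proof of the previous theorem, taking the bounds already supplied by Theorem 2.4 as the starting point and then splitting the interval of admissible values for $f$ at its midpoint. Writing $F = m(e^{qr}-1)$, Theorem 2.4 reads
$$\lceil F - e^{qr}\rceil \leq f(m,q,r) \leq \lfloor F\rfloor,$$
so $f(m,q,r)$ is an integer lying in the closed real interval $[\,F - e^{qr},\, F\,]$, whose midpoint is $F - \frac{e^{qr}}{2}$.

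The crucial step is to observe that this midpoint is not an integer. Indeed $F - \frac{e^{qr}}{2} = \left(m - \frac12\right)e^{qr} - m$, and since $q, r \geq 1$ the number $e^{qr}$ is irrational (in fact transcendental) while $m - \frac12 \neq 0$, so $\left(m-\frac12\right)e^{qr}$ — and hence $F - \frac{e^{qr}}{2}$ — is irrational. Therefore $\left\lceil F - \frac{e^{qr}}{2}\right\rceil = \left\lfloor F - \frac{e^{qr}}{2}\right\rfloor + 1$, and every integer is either at most $\left\lfloor F - \frac{e^{qr}}{2}\right\rfloor$ or at least $\left\lfloor F - \frac{e^{qr}}{2}\right\rfloor + 1$, according to whether it lies to the left or to the right of the midpoint.

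Applying this dichotomy to the integer $f(m,q,r)$ and combining it with the two inequalities of Theorem 2.4 splits the single chain into exactly the two announced cases, namely $\lceil F - e^{qr}\rceil \leq f(m,q,r) \leq \left\lfloor F - \frac{e^{qr}}{2}\right\rfloor$ in the first case and $\left\lfloor F - \frac{e^{qr}}{2}\right\rfloor + 1 \leq f(m,q,r) \leq \lfloor F\rfloor$ in the second, which is the assertion. I would add a remark that, in contrast with the case $q = r = 1$ of the previous theorem, where the whole range has width less than $e < 3$ and the midpoint cut pins $f$ to two consecutive integers, here the range has width roughly $e^{qr}$, so each of the two sub-intervals may still contain several integers, and one of them may even be empty; the disjunction nevertheless holds.

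The only point needing care — rather than the inequality bookkeeping, which is routine once Theorem 2.4 is in hand — is the non-integrality argument: one must justify that $e^{qr}$ is irrational for every pair of positive integers $q, r$ (equivalently, that $e^N \notin \mathbb{Q}$ for each nonzero integer $N$), and then confirm that an irrational midpoint forces the ceiling and floor to differ by exactly $1$, which is what legitimizes the split into two adjacent half-ranges.
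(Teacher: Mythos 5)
Your proposal is correct and follows essentially the same route as the paper, which simply says the argument is ``of similar fashion'' to the preceding theorem: you take the bounds of Theorem 2.4, split the admissible range at the midpoint $F - \frac{e^{qr}}{2}$, and use the irrationality of $e^{qr}$ (hence of the midpoint) to justify the floor/ceiling bookkeeping, exactly the mechanism used for the $q=r=1$ case. In fact your write-up supplies more detail than the paper's one-line proof, and your remark that the resulting disjunction no longer pins $f$ to two consecutive integers is consistent with the paper's own comments.
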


\begin{proof}
The proof of this theorem is of similar fashion as the previous one. Since $e^{qr}$ is the range, Then the candidates for the value of $f$ is within this range.
\end{proof}

Having bounds provides a set of numbers that can be a candidate for the function. As such, we illustrate Theorem 3.1 by using a C++ program below.

\begin{lstlisting}
#include <iostream>
#include <cmath>
using namespace std;

int main()
{
int i=0;
double s,m,q,r;
cout << "begin at:"; cin>> m; 
cout << "sum up to:"; cin >> q;
cout << "multiple of:"; cin >> r;

 cout << "Lower Bound:" << ceil((exp(q*r)-1)*m- exp(q*r))  << "\n";
 cout << "Upperbound :" << floor((exp(q*r)-1)*m) << "\n";
 cout << "midlow:" << floor((exp(q*r)-1)*m- (exp(q*r)/2))<< "\n";
 cout << "midhigh:" << ceil((exp(q*r)-1)*m- (exp(q*r)/2))<< "\n";
 cout << "Real Value:";
  do{s= s + 1/(r*m);i++;m++;} while( s < q); cout<< i-1;
    
 return 0;
    
}
\end{lstlisting}

To summarize, we have the table below to show some particular examples of the theorems stated in this paper with the use of the program shown above.

\begin{center}
    \begin{tabular}{|c|c|c|c|c|c|c|c|c|}
    \hline
       $EN$ & $m$ & $q$ & $r$ & $LB$ & $ML$ & $RV$ & $MH$ & $UB$\\ \hline
       1& 5 & 1 & 1 & 6  & 7 & 7 & 8 & 8\\ \hline
       2& 11 & 1 & 1 & 17  & 17& 18 & 18 & 18\\ \hline
       3& 1000 & 1 & 1 & 1716  & 1716& 1717 & 1717 & 1718\\ \hline
       4& 23 & 2 & 3 & 8853  & 9054 & 9055& 9055 & 9255\\ \hline
       5&  5 & 5 & 1 & 589  & 662& 664 & 663 & 737\\ \hline
       6& 100,000 & 1 & 1 & 171826  & 171826 & 171827 & 171827 & 171828\\ \hline
       7& 2 & 1 & 10 & 22025  & 33037& 33615 & 33038 & 44050\\ \hline
       8& 3 & 3 & 3 & 16204  & 20254& 20387 & 20255 & 24306\\ \hline
       9& 3 & 10 & 1 & 44050  & 55063& 55422 & 55064 & 66076\\ \hline
       10& 105 & 1 & 1 & 178  & 179& 179 & 180 & 180\\ \hline
    \end{tabular}
\end{center}

 The Legend for the table above:
 
 EN= Example Number. LB= Lower bound. ML= Middle Low. RV= Real Value. MH= Middle High. UB= Upperbound.

Let us expound on some of the working examples above.

Take Example no 1. $m=5$, $q=1$, $r=1$. $f(5,1,1) = 7$. Thus,

$$\frac{1}{5}+ \frac{1}{6}+ \cdots +\frac{1}{11} < 1 < \frac{1}{5}+ \frac{1}{6}+ \cdots +\frac{1}{12} $$

$$0.93654... < 1 < 1.01988... $$

Take Example no 6. $m=100000$, $q=1$, $r=1$. $f(100000,1,1) = 171827$. Thus,

$$\frac{1}{100000}+ \frac{1}{100001}+ \cdots +\frac{1}{271826} < 1 < \frac{1}{100000}+ \frac{1}{100001}+ \cdots +\frac{1}{271827} $$
Using wolfram-alpha at https://www.wolframalpha.com/, we have

$$0.9999... < 1 < 1.0000... $$

Take Example no 5. $m=5 $, $q=5$, $r=1$. $f(5,5,1) = 664$. Thus,

$$\frac{1}{5}+ \frac{1}{6}+ \cdots +\frac{1}{668} < 5 < \frac{1}{5}+ \frac{1}{6}+ \cdots+\frac{1}{669} $$

$$ 4.99891... < 5 < 5.00041... $$

Take Example no 7. $m=2 $, $q=1$, $r=10$. $f(2,1,10) = 33615$. Thus,

$$\frac{1}{20}+ \frac{1}{30}+ \frac{1}{40}+\cdots +\frac{1}{336160} < 1 < \frac{1}{20}+ \frac{1}{30}+ \frac{1}{40}+\cdots +\frac{1}{336170} $$

$$ 0.99999879... < 1 < 1.0000017...$$

And last, we can take Example no 8. $m=3 $, $q=3$, $r=3$. $f(3,3,3) = 20387$. Thus,

$$\frac{1}{9}+ \frac{1}{12}+ \frac{1}{15}+\cdots +\frac{1}{61167} < 3 < \frac{1}{9}+ \frac{1}{12}+ \frac{1}{15}+\cdots +\frac{1}{61170} $$

$$ 2.999997... < 3 < 3.00001335...$$
\section{Acknowledgement}

The author would like to thank Mohammad Abdul Aziz Qureshi for valuable time in verifying one formula introduced in this paper. Also, the author would like to thank Jose Arnaldo Dris for essential conversations. Ultimately, the author would like to thank the Almighty God for everything therein.
\begin {thebibliography}{1}

\bibitem{DagalAlo}
Dagal, K. (2015). A lower bound for $\tau(n)$ of any k-perfect numbers. Retrieved 2 April 2020, from http://www.m-hikari.com/pms/pms-2015/pms-1-4-2015/4923.html
\bibitem{DagalOp}
Dagal, K. (2020). A New Operator For Egyptian Fraction. Retrieved from https://arxiv.org/abs/2003.13229

\bibitem{Tavares}
Tavares, A. (2010). How do I prove this sum is not an integer. Retrieved 2 April 2020, from https://math.stackexchange.com/questions/5219/how-do-i-prove-this-sum-is-not-an-integer

\end{thebibliography}

\end{document}